\newtheorem{lemma}{Lemma}[section]
\newtheorem{claim}{Claim}[section]
\newtheorem{theorem}{Theorem}[section]
\newtheorem{corollary}{Corollary}[section]
\theoremstyle{definition}
\newtheorem{definition}{Definition}[section]
\newtheorem{construction}{Construction}[section]
\newtheorem{example}{Example}[section]
\newtheorem{remark}{Remark}[section]
\theoremstyle{plain}
\newcommand{\diam}{\operatorname{diam}}
\newcommand{\dis}{\operatorname{dis}}
\newcommand{\dist}{\operatorname{d}}
\newcommand{\Ult}{\operatorname{Ult}}
\renewcommand{\U}{\textbf{U}}
\renewcommand{\:}{\colon}
\newcommand{\x}{\times}
\newcommand{\s}{\sigma}
\renewcommand{\ss}{\subset}
\newcommand{\R}{\mathbb{R}}
\newcommand{\N}{\mathbb{N}}
\newcommand{\cP}{\mathcal{P}}
\newcommand{\cR}{\mathcal{R}}
\title{Ultrametric spaces and clouds}
\author{I.\,N.\,Mikhailov}
\date{}
\begin{document}
\maketitle
\begin{abstract}
In \cite{UltraMemoli}, the natural way to construct an ultrametric space from a given metric space was presented. It was shown that the corresponding map $\U$ is $1$-Lipschitz for every pair of bounded metric spaces, with respect to the Gromov--Hausdorff distance. We make a simple observation that $\U$ is $1$-Lipschitz for pairs of all, not necessarily bounded, metric spaces. 
We then study the properties of the mapping $\U$. We show that, for a given dotted connected metric space $A$, the mapping $\Psi\: X\mapsto X\times A$ from the proper class of all bounded ultrametric spaces ($X\times A$ is endowed with the Manhattan metric) preserves the Gromov--Hausdorff distance. Moreover, the mapping $\U$ is inverse to $\Psi$. By a dotted connected metric space, we mean a metric space in which for an arbitrary $\varepsilon > 0$ and every two points $p,\,q$, there exist points $x_0 = p,\,x_1,\,\ldots,\,x_n = q$ such that $\max_{0\le j \le n-1}|x_jx_{j+1}|\le \varepsilon$. At the end of the paper, we prove that each class (proper or not) consisting of unbounded metric spaces on finite Gromov--Hausdorff distances from each other cannot contain an ultrametric space and a dotted connected space simultaneously.

\textbf{Keywords}: ultrametric space, Gromov-Hausdorff distance.
\end{abstract}

\section{Introduction}
\markright{\thesection.~Introduction}

In the present paper we continue the investigation of the classical Gromov--Hausdorff distance geometry. Traditionally, this distance appears in the studies of compact metric spaces. The space of isometry classes of compact metric spaces, called \emph{the Gromov--Hausdorff class}, is well studied (\cite{BBI}, \cite{GromovFr},\,\cite{GromovEng}) and possesses wonderful geometric properties. 

In case of unbounded metric spaces, the usual Gromov--Hausdorff distance sometimes does not reflect natural properties. For example, a sequence of balls in $\R^2$ with radii tending to infinity lies on infinite Gromov--Hausdorff distance from $\R^2$ itself. To deal with such situations, \emph{the pointed Gromov--Hausdorff convergence} was introduced in case of unbounded metric spaces (\cite{BBI}, \cite{Herron}). 

However, in the recent works (\cite{BTI}, \cite{BogatyTuzhilin}, \cite{BBRT}, \cite{BogatyBogataya}, \cite{MikhailovYoung}, \cite{MikhailovTuzhilin}) the classical Gromov--Hausdorff distance was studied without restriction on compactness of arising metric spaces. In \cite{BogatyTuzhilin}, the authors developed techniques that allow to work with the proper class $\mathcal{GH}$ of all metric spaces considered up to an isometry equipped with the Gromov--Hausdorff distance. That approach is based on von Neumann--Bernays--G\"odel set theory. In NBG theory all objects are called \emph{classes} that belong to one
of two types: sets and proper classes. A class is called a \emph{set} if it belongs to some other class. If the class is not a member of any other class, then it is called \emph{proper}. 

In~\cite{GromovEng}, M.~Gromov described some properties of the proper class $\mathcal{GH}$. For example, M.~Gromov suggested to study the subclasses of $\mathcal{GH}$ that consist of metric spaces on finite Gromov--Hausdorff distances from each other. Following \cite{BogatyTuzhilin}, we call such subclasses \emph{clouds}. M.\,Gromov announced in \cite{GromovEng} that all clouds are complete and contractible. In \cite{BogatyTuzhilin}, the proof of the first statement was provided, while the second statement turned out to be rather non-trivial. Also some set theoretic issues arised. It was shown by B.~Nesterov (private conversations) that clouds are proper classes. Therefore, it is impossible to introduce topology on a proper class. Indeed, a proper class can not belong to any other class by definition, but each topological space is an element of its own topology. In \cite{BogatyTuzhilin}, the authors showed how to avoid such issues, and introduced an analogue of topology on so-called set-filtered classes. That technique allows to define continuous mappings between clouds.

In the present paper, we investigate the geometry of clouds. However, for the sake of simplicity, we avoid working with the set-filtered classes defined in \cite{BogatyTuzhilin}, and formulate all properties of mappings between clouds and particular subclasses of clouds in terms of metric defined by the Gromov--Hausdorff distance.


In \cite{UltraMemoli}, a natural way to construct an ultrametric space from a given metric space was presented. This construction provides one of few non-trivial lower estimates on the Gromov--Hausdorff distance between bounded metric spaces (\cite{LMZ}). This estimate was applied effectively to the problem of calculating Gromov--Hausdorff distances between vertex sets of regular polygons inscribed in a single circle (\cite{Talipov}). We start with a simple generalization of this estimate to the case of unbounded metric spaces. 
It follows from our generalization that the assignment to an arbitrary metric space of an ultrametric space from the mentioned construction induces a $1$-Lipschitz mapping $\U$ between the corresponding clouds. We then proceed to study simple properties of the mapping $\U$. At first, we consider a natural class of \emph{fair} metrics $\rho$ on the Cartesian product $X\times Y$ of metric spaces $(X,\,\dist_X)$ and $(Y,\,\dist_Y)$ such that restrictions of $\rho$ on $X\times\{y\}$ and $\{x\}\times Y$ coincide with $\dist_X$ and $\dist_Y$, respectively, for all $x\in X$, $y\in Y$. We establish the equality  $\U(X\times_\rho Y) = \U(X)\times_{\ell^\infty}\U(Y)$, for an arbitrary fair metric $\rho$ on $X\times Y$ which is bounded from below by the standard $\ell^\infty$-metric on $X\times Y$. That is, for arbitrary $p = (x,\,y)$, $p' = (x',\,y')$ from $X\times Y$, 
$$\rho(p,\,p') \ge \dist_{X\times_{\ell^\infty}Y}(p,\,p') := \max\bigl\{\dist_X(x,\,x'),\,\dist_Y(y,\,y')\bigr\}.$$
Secondly, we consider the cloud $[\Delta_1]$ of all bounded metric spaces and prove that its subclass $\Ult$ consisting of all bounded ultrametric spaces is closed. Then we recall the well-known class of metric spaces such that, for an arbitrary $\varepsilon > 0$ every and two points $p,\,q$, there exist points $x_0 = p,\,x_1,\,\ldots,\,x_n = q$ such that $\max_{0\le j \le n-1}|x_jx_{j+1}|\le \varepsilon$. We call them \emph{dotted connected} metric spaces. We show that, for a given dotted connected metric space $A$, the mapping $\Psi\: X\mapsto X\times A$ from the proper class of all bounded ultrametric spaces $\Ult$ ($X\times A$ is endowed with the Manhattan or $\ell_1$ metric) preserves the Gromov--Hausdorff distance. Moreover, the mapping $\U$ is inverse to $\Psi$. At the end of the paper, we show that a cloud consisting of unbounded metric spaces cannot contain both an ultrametric space and a dotted connected space.


\subsection*{Acknowledgements}
The author is grateful to his scientific advisor, professor A.\,A.\,Tuzhilin, and also professor A.\,O.\,Ivanov for permanent attention to the work. He also thanks the team of the seminar ``Theory of extremal networks'' leaded by A.\,O.\,Ivanov and A.\,A.\,Tuzhilin.

This work is supported by the grant 24-8-2-15-1 of the Foundation for the Advancement of Theoretical Physics and Mathematics.

\section{Preliminaries}
\markright{\thesection.~Preliminaries}
For an arbitrary metric space $X$, the distance between its points $x$ and $y$ we denote by $|xy|$, or by $\dist_X(x,\,y)$ if we need to highlight the ambient metric space. Let $B_r(a)=\{x\in X:|ax|\le r\}$ and $U_r(a)=\{x\in X:|ax|<r\}$ be the closed ball and the open ball of radius $r$ with centre at the point $a$, respectively. 
For non-empty subsets $A\ss X$ and $B\ss X$, we put $\dist(A,\,B)=\inf\bigl\{|ab|:\,a\in A,\,b\in B\bigl\}$.

\begin{definition}
Let $A$ and $B$ be non-empty subsets of a metric space $X$. \emph{The Hausdorff distance} between $A$ and $B$ is the value
$$
\dist_H(A,\,B)=\inf\bigl\{r>0:A\ss B_r(B),\,B\ss B_r(A)\bigr\}.
$$
\end{definition}

\begin{definition}
Let $X$ and $Y$ be metric spaces. If $X',\,Y'$ are subsets of a metric space $Z$ such that $X'$ is isometric to $X$ and $Y'$ is isometric to $Y$, then we call the triple $(X',\,Y',\,Z)$ \emph{a metric realization of the pair $(X,\,Y)$}.
\end{definition}

\begin{definition}
\emph{The Gromov--Hausdorff distance $\dist_{GH}(X,\,Y)$} between two metric spaces $X$, $Y$ is the infinum of positive numbers $r$ such that there exists a metric realization $(X',\,Y',\,Z)$ of the pair $(X,\,Y)$ with $\dist_H(X',\,Y') \le r$.
\end{definition}

Let $X$ and $Y$ be non-empty sets. Recall that any subset $\s\ss X\x Y$ is called a \emph{relation} between $X$ and $Y$. Denote the set of all non-empty relations between $X$ and $Y$ by $\cP_0(X,\,Y)$. We put
$$
\pi_X\:X\x Y\to X,\;\pi_X(x,\,y)=x,
$$
$$
\pi_Y\:X\x Y\to Y,\;\pi_Y(x,\,y)=y.
$$

\begin{definition}
A relation $R\ss X\x Y$ is called a \emph{correspondence\/} if $\pi_X|_R$ and $\pi_Y|_R$ are surjective. In other words, correspondences are multivalued surjective mappings. Denote the set of all correspondences between $X$ and $Y$ by $\cR(X,\,Y)$. 
\end{definition}

\begin{definition}
For an arbitrary correspondence $R\in\mathcal{R}(X,\,Y)$, define $R^{-1}\in\mathcal{R}(Y,\,X)$ as follows $$R^{-1}=\bigl\{(y,\,x)\:(x,\,y)\in R\bigr\}.$$
\end{definition}

\begin{definition}
Let $X$, $Y$ be arbitrary metric spaces. Then for every $\s\in\cP_0(X,\,Y)$, \emph{the distortion of $\s$} is defined as
$$
\dis\s=\sup\Bigl\{\bigl||xx'|-|yy'|\bigr|:(x,\,y),\,(x',\,y')\in\s\Bigr\}.
$$
\end{definition}

\begin{claim}[\cite{BBI}, \cite{TuzhilinLectures}] \label{claim:distGHformula}
For arbitrary metric spaces $X$ and $Y$, the following equality holds
$$
2\dist_{GH}(X,\,Y)=\inf\bigl\{\dis\,R:R\in\cR(X,\,Y)\bigr\}.
$$
\end{claim}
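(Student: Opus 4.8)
The plan is to establish the two inequalities $2\dist_{GH}(X,\,Y)\le\inf\{\dis R:R\in\cR(X,\,Y)\}$ and $\inf\{\dis R:R\in\cR(X,\,Y)\}\le 2\dist_{GH}(X,\,Y)$ separately; together they give the claimed equality. (If one side is $+\infty$ the corresponding inequality is vacuous, so we may assume the relevant quantity is finite.) For the first inequality I would take an arbitrary correspondence $R\in\cR(X,\,Y)$, fix any $r>\frac12\dis R$, and glue $X$ and $Y$ along $R$: on the disjoint union $X\sqcup Y$ define a function $d$ that equals $\dist_X$ on $X\x X$, equals $\dist_Y$ on $Y\x Y$, and for $x\in X$, $y\in Y$ is given by
$$d(x,\,y)=d(y,\,x)=\inf\bigl\{\dist_X(x,\,x')+r+\dist_Y(y',\,y):(x',\,y')\in R\bigr\}.$$
Once $d$ is known to be a metric on $X\sqcup Y$, the inclusion of the pair $(X,\,Y)$ into $(X\sqcup Y,\,d)$ is a metric realization, and since $R$ is a correspondence every $x\in X$ satisfies $d(x,\,y)\le r$ for some $y$ (take $(x,\,y)\in R$) and symmetrically; hence $\dist_H(X,\,Y)\le r$ in this realization, so $\dist_{GH}(X,\,Y)\le r$. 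Letting $r\downarrow\frac12\dis R$ and then taking the infimum over $R$ yields $2\dist_{GH}(X,\,Y)\le\inf\{\dis R\}$.

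The substantive point in the first inequality is verifying that $d$ is a metric — this is the step I expect to require the most care. Positivity and symmetry are immediate, the mixed distances being bounded below by $r>0$. The mixed triangle inequalities with two points on the same side and one on the other split into two cases: $d(x,\,y)\le\dist_X(x,\,x')+d(x',\,y)$ for $x,\,x'\in X$, $y\in Y$ (and its analogue $d(x,\,y)\le d(x,\,y')+\dist_Y(y',\,y)$) is essentially built into the definition of $d$ as an infimum. The genuinely non-trivial case is $\dist_X(x,\,x')\le d(x,\,y)+d(y,\,x')$: here one unfolds both infima, writes $(a,\,b),\,(a',\,b')\in R$ for near-optimal pairs, uses $\dist_Y(b,\,y)+\dist_Y(b',\,y)\ge\dist_Y(b,\,b')$, and then invokes $2r>\dis R\ge\dist_X(a,\,a')-\dist_Y(b,\,b')$ to absorb the gap, obtaining $d(x,\,y)+d(x',\,y)\ge\dist_X(x,\,a)+\dist_X(a,\,a')+\dist_X(a',\,x')\ge\dist_X(x,\,x')$; the symmetric case with two points on the $Y$ side is identical. (If one prefers $r=\frac12\dis R$ directly, the same formula gives only a pseudometric when $\dis R=0$, and one passes to the metric identification, which leaves Hausdorff distances unchanged.)

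For the reverse inequality I would argue straight from the definition of $\dist_{GH}$: given $\e>0$, choose a metric realization $(X',\,Y',\,Z)$ with $\dist_H(X',\,Y')<r:=\dist_{GH}(X,\,Y)+\e$, and identify $X$, $Y$ with $X'$, $Y'$ inside $Z$. Put $R=\bigl\{(x,\,y)\in X\x Y:\dist_Z(x,\,y)<r\bigr\}$. Because $\dist_H(X',\,Y')<r$, every point of $X$ lies within $r$ of some point of $Y$ and vice versa, so $R$ is a correspondence. For $(x,\,y),\,(\tilde x,\,\tilde y)\in R$ the triangle inequality in $Z$ gives $\bigl|\dist_X(x,\,\tilde x)-\dist_Y(y,\,\tilde y)\bigr|=\bigl|\dist_Z(x,\,\tilde x)-\dist_Z(y,\,\tilde y)\bigr|\le\dist_Z(x,\,y)+\dist_Z(\tilde x,\,\tilde y)<2r$, so $\dis R\le 2r$ and hence $\inf\{\dis R\}\le 2\dist_{GH}(X,\,Y)+2\e$; letting $\e\to 0$ finishes the proof. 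This direction is routine; the only delicate ingredient anywhere in the argument is the (pseudo)metric verification of the glued distance in the first half.
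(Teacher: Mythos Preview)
Your argument is correct and is precisely the standard proof of this formula. Note, however, that the paper does not give its own proof of this claim: it is stated with a citation to \cite{BBI} and \cite{TuzhilinLectures} and used as a black box throughout. What you have written is essentially the proof one finds in those references (the glued metric on $X\sqcup Y$ for one direction, the near-optimal realization for the other), so there is no meaningful divergence to discuss.
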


In this paper by $X\times_\rho Y$ we denote the Cartesian product $X\times Y$ endowed with the metric $\rho$. 

Mostly, we consider the Cartesian product $X\times Y$ of metric spaces $(X,\,\dist_X)$ and $(Y,\,\dist_Y)$ with the Manhattan or $\ell^1$ metric $$\dist_{X\times_{\ell^1} Y}\bigl((x,\,y),\,(x',\,y')\bigr) = \dist_X(x,\,x')+\dist_Y(y,\,y').$$
In that case, we \textbf{omit} the index $\rho$ corresponding to a metric.

If $(X,\,\dist_X)$ and $(Y,\,\dist_Y)$ are metric spaces, then by $X\times_{\ell^\infty} Y$ we denote the Cartesian product $X\times Y$ endowed with $\ell^\infty$-metric $$\dist_{X\times_{\ell^\infty}Y}\bigl((x,\,y),\,(x',\,y')\bigr) = \max\bigl\{\dist_X(x,\,x'),\,\dist_Y(y,\,y')\bigr\}.$$
\begin{definition}
Suppose $(X,\,\dist_X)$ and $(Y,\,\dist_Y)$ are arbitrary metric spaces. Consider a Cartesian product $X\times Y$. We call a metric $\rho$ on $X\times Y$ \emph{fair} if, for arbitrary $x\in X$, $y\in Y$, the restrictions $\rho|_{X\times\{y\}}$, $\rho|_{\{x\}\times Y}$ equal $\dist_X$ and $\dist_Y$, respectively.
\end{definition}

\subsection{Clouds}

Denote by $\mathcal{VGH}$ the class consisting of all non-empty metric spaces equipped with the Gromov--Hausdorff distance. 

We recall the following terminology from \cite{BogatyTuzhilin}: 

\begin{itemize}[leftmargin=*]
\item \emph{A distance function} on a proper class $\mathcal{A}$ is an arbitrary mapping $\rho\: \mathcal{A}\times \mathcal{A}\to [0,\,\infty]$, for which always $\rho(x,\,x) = 0$ and $\rho(x,\,y) = \rho(y,\,x)$.
\item If the distance function satisfies the triangle inequality, i.e., if $\rho(x,\,z)\le \rho(x,\,y) + \rho(y,\,z)$ is always satisfied, then $\rho$ is called \emph{generalized pseudometric} (the word ``generalized'' correspond to the possibility of taking the value $\infty$).
\item If the positive definiteness condition is additionally satisfied for a generalized pseudometric, i.e., if $\rho(x,\,y) = 0$ always implies $x = y$, then we call such $\rho$ \emph{a generalized metric}.
\item Finally, if $\rho$ does not take value, then in the above conditions we will
omit the word ``generalized''.
\end{itemize}

\begin{theorem}[\cite{BBI}]
The Gromov–Hausdorff distance is a generalized pseudometric vanishing on each pair of isometric spaces.
\end{theorem}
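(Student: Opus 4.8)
The plan is to verify the defining properties of a generalized pseudometric --- symmetry, vanishing on the diagonal, and the triangle inequality --- together with vanishing on isometric pairs. Three of these follow at once from the definition of a metric realization. Symmetry: if $(X',\,Y',\,Z)$ is a metric realization of $(X,\,Y)$, then $(Y',\,X',\,Z)$ is one of $(Y,\,X)$, and $\dist_H$ is symmetric, so $\dist_{GH}(X,\,Y) = \dist_{GH}(Y,\,X)$ after passing to infima. Vanishing on the diagonal and, more generally, on isometric pairs: if $f\: X\to Y$ is an isometry (in particular when $X = Y$ and $f = \id$), then $(Y,\,Y,\,Y)$ is a metric realization of $(X,\,Y)$ --- the first copy of $Y$ being identified with $X$ via $f$ --- and $\dist_H(Y,\,Y) = 0$, so $\dist_{GH}(X,\,Y) = 0$.

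The only property needing an argument is the triangle inequality, and here I would use the correspondence formula of Claim~\ref{claim:distGHformula}. Fix metric spaces $X,\,Y,\,Z$; for $R_1\in\cR(X,\,Y)$ and $R_2\in\cR(Y,\,Z)$, form the composition
$$R_2\circ R_1 = \bigl\{(x,\,z)\: \text{there is }y\in Y\text{ with }(x,\,y)\in R_1\text{ and }(y,\,z)\in R_2\bigr\}.$$
Surjectivity of the projections of $R_1$ and of $R_2$ immediately gives $R_2\circ R_1\in\cR(X,\,Z)$. For $(x,\,z),\,(x',\,z')\in R_2\circ R_1$ with witnesses $y,\,y'\in Y$,
$$\bigl||xx'| - |zz'|\bigr| \le \bigl||xx'| - |yy'|\bigr| + \bigl||yy'| - |zz'|\bigr| \le \dis R_1 + \dis R_2,$$
so $\dis(R_2\circ R_1)\le \dis R_1 + \dis R_2$. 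Taking infima over $R_1$ and over $R_2$ and applying Claim~\ref{claim:distGHformula} yields $\dist_{GH}(X,\,Z)\le \dist_{GH}(X,\,Y) + \dist_{GH}(Y,\,Z)$.

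The word ``generalized'' is present only to allow the value $\infty$, and that is the sole subtlety worth flagging: for instance a bounded space and an unbounded one lie at Gromov--Hausdorff distance $\infty$ (every correspondence between them has infinite distortion), in which case the triangle inequality is vacuous and the matching infimum of distortions is $+\infty$ as well, so nothing false is asserted. If one wished to avoid Claim~\ref{claim:distGHformula}, the triangle inequality would be the only nontrivial step: one would glue a metric realization of $(X,\,Y)$ to one of $(Y,\,Z)$ along their shared copy of $Y$, extend the distance to $X'\sqcup Z'$ by the usual infimum-over-$Y$ formula, and check that this is a (possibly infinite) metric --- that gluing step is the one I would expect to demand the most care.
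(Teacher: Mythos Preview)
The paper does not give its own proof of this theorem; it is simply quoted from \cite{BBI} as a known result, so there is no in-paper argument to compare against. Your argument is correct and standard: symmetry and vanishing on isometric pairs are immediate from the definition of a metric realization, and the triangle inequality via composition of correspondences together with Claim~\ref{claim:distGHformula} is the usual route. The only point worth flagging is that Claim~\ref{claim:distGHformula} is itself cited in the paper without proof (from \cite{BBI}, \cite{TuzhilinLectures}); there is no genuine circularity, since in those references the distortion formula is established independently, and in any case you already indicate the self-contained alternative via gluing realizations along~$Y$.
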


The class $\mathcal{GH}_0$ is obtained from $\mathcal{VGH}$ by factorization over the zero-value equivalence for which $X$ is equivalent to $Y$, if and only if $\dist_{GH}(X,\,Y) = 0$.

\begin{definition}
Consider the following natural equivalence relation $\sim_1$ on $\mathcal{GH}_0$: $X\sim_1 Y$, if and only if $\dist_{GH}(X,\,Y) < \infty$. The corresponding equivalence classes are called \emph{clouds}.
\end{definition}

If $X$ is a metric space, we denote the corresponding cloud by $[X]$. By $\Delta_1$ we denote a metric space that consists of one point. Therefore, $[\Delta_1]$ is the cloud consisting of all bounded metric spaces.

\begin{definition}
By $\Ult$ we denote the subclass of $[\Delta_1]$ consisting of all classes of metric spaces on the zero Gromov--Hausdorff distance from some bounded ultrametric space. 
\end{definition}

\begin{definition}
The subclass $A$ of a cloud $[X]$ is called \emph{closed}, if and only if, for every $Y\in [X]\backslash A$, there exists $r > 0$ such that $U_r(Y)\subset [X]\backslash A$.
\end{definition}

\subsection{Ultrametrization mapping and dotted connected metric spaces}

\begin{definition}
A metric space $(X,\,\dist_X)$ is called \emph{ultrametric}, if and only if for all $x,\,y,\,z\in X$, the strengthened ultrametric triangle inequality holds, namely, $\dist_X(x,\,z)\le \max\bigl\{\dist_X(x,\,y),\,\dist_X(y,\,z)\bigr\}$.
\end{definition}

In the paper \cite{UltraMemoli} the following construction was first considered

\begin{definition}
For a metric space $(X,\,\dist_X)$, consider the pseudo ultrametric space $(X,\,u_X)$ where $u_X\colon X\times X\to\R$ is defined by
$$(x,\,x')\to u_X(x,\,x') := \inf\left\{\max_{0\le i\le n-1} \dist_X(x_i,\,x_{i+1})\colon x = x_0,\,\ldots,\,x_n = x'\,\text{for some}\;n\ge 1\right\}.$$ Now, define \textbf{U}$(X)$ to be the quotient metric space induced by $(X,\,u_X)$ under the equivalence: $x\sim x'$ if and only if $u_X(x,\,x') = 0$. 
\end{definition}

The following result was first introduced and proved in \cite{UltraMemoli} for finite metric spaces, and then formulated in \cite{LMZ} for bounded metric spaces.

\begin{theorem}[\cite{UltraMemoli}, \cite{LMZ}]\label{UltraInequality}
For all bounded metric spaces $X$ and $Y$, the following inequality holds
$$\dist_{GH}(X,\,Y)\ge\dist_{GH}\bigl(\textbf{\emph{U}}(X),\,\textbf{\emph{U}}(Y)\bigr).$$
\end{theorem}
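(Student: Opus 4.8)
The plan is to combine the correspondence formula for the Gromov--Hausdorff distance (Claim~\ref{claim:distGHformula}) with the observation that the minimax construction $u_X$ is transported along a correspondence with at most a bounded loss, the loss being controlled by the distortion of that correspondence. Concretely, fix an arbitrary $R\in\cR(X,\,Y)$; the goal is to produce a correspondence $\widetilde R\in\cR\bigl(\U(X),\,\U(Y)\bigr)$ with $\dis\widetilde R\le\dis R$. Granting this, Claim~\ref{claim:distGHformula} gives
$$2\dist_{GH}\bigl(\U(X),\,\U(Y)\bigr)\le\dis\widetilde R\le\dis R$$
for every $R$, and taking the infimum over $R\in\cR(X,\,Y)$ yields $\dist_{GH}\bigl(\U(X),\,\U(Y)\bigr)\le\dist_{GH}(X,\,Y)$, which is the claimed inequality.

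To build $\widetilde R$, write $[x]$ for the class of $x\in X$ in $\U(X)$ and $[y]$ for the class of $y\in Y$ in $\U(Y)$; recall that $u_X$ is constant on classes, so $u_X(x,\,x')$ also denotes the distance between $[x]$ and $[x']$ in $\U(X)$, and similarly for $u_Y$. Put
$$\widetilde R=\bigl\{([x],\,[y])\colon (x,\,y)\in R\bigr\}.$$
Since the canonical class maps $X\to\U(X)$ and $Y\to\U(Y)$ are surjective and $R$ is a correspondence, $\widetilde R$ is a correspondence. For the distortion bound, take $([x],\,[y]),\,([x'],\,[y'])\in\widetilde R$ with $(x,\,y),\,(x',\,y')\in R$ and an arbitrary chain $x=x_0,\,x_1,\,\ldots,\,x_n=x'$ in $X$. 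Using that $R$ is a correspondence, pick $y_i$ with $(x_i,\,y_i)\in R$ for $1\le i\le n-1$ and set $y_0=y$, $y_n=y'$, which is consistent because $(x_0,\,y)=(x,\,y)\in R$ and $(x_n,\,y')=(x',\,y')\in R$. Then $\dist_Y(y_i,\,y_{i+1})\le\dist_X(x_i,\,x_{i+1})+\dis R$ for each $i$, whence
$$u_Y(y,\,y')\le\max_{0\le i\le n-1}\dist_Y(y_i,\,y_{i+1})\le\max_{0\le i\le n-1}\dist_X(x_i,\,x_{i+1})+\dis R.$$
Taking the infimum over all chains from $x$ to $x'$ gives $u_Y(y,\,y')\le u_X(x,\,x')+\dis R$; exchanging the roles of $X$ and $Y$ gives $u_X(x,\,x')\le u_Y(y,\,y')+\dis R$, so $\bigl|u_X(x,\,x')-u_Y(y,\,y')\bigr|\le\dis R$. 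As $([x],\,[y])$ and $([x'],\,[y'])$ were arbitrary in $\widetilde R$, this shows $\dis\widetilde R\le\dis R$, completing the argument.

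Boundedness of $X$ and $Y$ enters only to guarantee $u_X\le\diam X<\infty$ and $u_Y\le\diam Y<\infty$, so that $\U(X)$ and $\U(Y)$ are genuine (finite-valued) metric spaces and all quantities above are well defined; the chain estimate itself is insensitive to it — this is exactly why the same proof will later yield the unbounded generalization. I do not anticipate a serious obstacle here: the one point requiring care is the lifting of a chain in $X$ to a chain in $Y$ along $R$ while keeping the two endpoints matched to $y$ and $y'$, together with the verification that $\widetilde R$ descends to the quotients and remains surjective. That step is precisely where the minimax nature of $u$ meshes with the correspondence, and it is what delivers the sharp bound $\dis\widetilde R\le\dis R$ rather than a weaker constant.
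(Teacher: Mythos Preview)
Your argument is correct and is essentially the same as the paper's own proof (given for the general version, Theorem~\ref{UltraGeneralInequality}): pick a correspondence $R$, lift an $X$-chain from $x$ to $x'$ to a $Y$-chain from $y$ to $y'$ along $R$, and conclude $\bigl|u_X(x,x')-u_Y(y,y')\bigr|\le\dis R$; the paper simply reuses $R$ as a correspondence between the pseudometric spaces $(X,u_X)$ and $(Y,u_Y)$, whereas you pass explicitly to the quotient $\widetilde R$, which is the same thing up to bookkeeping. One small remark: your comment that boundedness is needed so that $u_X,u_Y$ are finite-valued is not quite accurate, since $u_X(x,x')\le\dist_X(x,x')<\infty$ always; the paper makes exactly this observation to extend the result to arbitrary spaces.
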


\begin{definition}
We call a metric space $X$ \emph{dotted connected} if, for arbitrary points $x,\,x'\in X$ and for arbitrary $\varepsilon > 0$, there exist points $x_0 = x,\,x_1,\,\ldots,\,x_n = x'$ in $X$ such that $$\max_{0\le i\le n-1} |x_jx_{j+1}|\le\varepsilon.$$
\end{definition}

\begin{example}
Note that every path-connected metric space $X$ is dotted connected.

\begin{proof} Indeed, take arbitrary $x,\,x'\in X$. 
Since $X$ is path-connected, there exists a continuous mapping $\gamma\colon[0,\,1]\to X,\,\gamma(0) = x,\,\gamma(1) = x'$. Since $\gamma$ is continuous, $[0,\,1]$ is compact, $\gamma$ is also uniformly continuous. Hence, for an arbitrary $\varepsilon > 0$, there exists $\delta>0$ such that $|\gamma(t)\gamma(t')|\le \varepsilon$ if $|tt'|\le \delta$. Choose $N\in\N$ such that $\frac{1}{N} < |\delta|$. Then let $x_0 = x,\,x_1 = \gamma(\frac{1}{N}),\,\ldots,\,x_N = x'$. By construction, $|x_jx_{j+1}|\le \varepsilon$, for all $j = 0,\,1,\,\ldots,\,N-1$. 
\end{proof}
\end{example}

\begin{remark}\label{remark: UltConnected}
We need the following simple observation: for an arbitrary dotted connected metric space $X$, we have $\U(X) = \Delta_1$.
\end{remark}

\subsection{Kuratowski embedding}

Let $X$ be an arbitrary metric space. By $C_b(X)$ we denote the Banach space of all bounded continuous real-valued functions on $X$, endowed with the $\sup$-norm.

\begin{theorem}[\cite{Kuratowski}]
For an arbitrary metric space $X$ and a fixed point $x_0\in X$, the mapping $$\Phi\colon X\to C_b(X),\;\Phi(x)(y) = \dist_X(x,\,y) - \dist_X(x_0,\,y)\quad\forall\,x,\,y\in X$$ is isometric.
\end{theorem}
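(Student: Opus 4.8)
The plan is to check two things in turn: first, that $\Phi$ really maps into $C_b(X)$—that is, for each fixed $x$ the function $\Phi(x)\colon y\mapsto \dist_X(x,y)-\dist_X(x_0,y)$ is bounded and continuous; and second, that $\Phi$ preserves distances, i.e.\ $\|\Phi(x)-\Phi(x')\|_\infty=\dist_X(x,x')$ for all $x,x'\in X$.

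First I would establish well-definedness. Boundedness is immediate from the triangle inequality: for every $y\in X$ one has $|\dist_X(x,y)-\dist_X(x_0,y)|\le \dist_X(x,x_0)$, so $\Phi(x)$ is bounded by $\dist_X(x,x_0)$ uniformly in $y$. Continuity follows because both $y\mapsto\dist_X(x,y)$ and $y\mapsto\dist_X(x_0,y)$ are $1$-Lipschitz, hence so is their difference. I would emphasize that this is precisely where subtracting $\dist_X(x_0,y)$ is needed: on an unbounded $X$ the function $y\mapsto\dist_X(x,y)$ alone need not be bounded, so the fixed base point $x_0$ is what lands us in $C_b(X)$.

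Next I would prove the isometry. The key observation is that the base-point terms cancel in a difference: for all $x,x',y\in X$,
$$\Phi(x)(y)-\Phi(x')(y)=\dist_X(x,y)-\dist_X(x',y),$$
whence $\|\Phi(x)-\Phi(x')\|_\infty=\sup_{y\in X}\bigl|\dist_X(x,y)-\dist_X(x',y)\bigr|$. Two inequalities then finish the argument. The reverse triangle inequality gives $|\dist_X(x,y)-\dist_X(x',y)|\le\dist_X(x,x')$ for every $y$, which bounds the supremum from above; and taking $y=x'$ yields $|\dist_X(x,x')-0|=\dist_X(x,x')$, which attains this bound. Therefore the supremum equals $\dist_X(x,x')$, and $\Phi$ is isometric.

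I do not expect a genuine obstacle in this argument; it is entirely elementary. The only step requiring any care is well-definedness, and in particular the role of $x_0$, which secures boundedness on unbounded spaces while—thanks to the cancellation displayed above—having no effect on the distance computation.
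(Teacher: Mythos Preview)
The paper does not prove this theorem; it is stated with a citation to Kuratowski and used as a black box. Your argument is correct and is the standard one: the triangle inequality gives boundedness of $\Phi(x)$, Lipschitzness gives continuity, the base-point term cancels in $\Phi(x)-\Phi(x')$, the reverse triangle inequality bounds the sup-norm by $\dist_X(x,x')$, and the choice $y=x'$ attains it. There is nothing to compare against in the paper, and nothing to fix in your proof.
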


\begin{construction}
Consider an arbitrary metric space $X$ and an arbitrary $t\ge 0$. Let $\Phi\: X\to C_b(X)$ be the Kuratowski embedding. We add to $\Phi(X)$ all the segments in $C_b(X)$ with endpoints $\Phi(x)$, $\Phi(y)$ such that $\dist_X(x,\,y)\le t$. By $D_t(X)$ we denote the resulting subset of $C_b(X)$ endowed with the induced metric.
\end{construction}

\begin{lemma}\label{lemma: Dc(X)}
Let $X$ be a metric space. $(i)$ If $\diam \emph{\U}(X) < c < \infty$, then the metric space $D_c(X)$ is path-connected; $(ii)$ For an arbitrary $t\ge 0$ the following inequality holds $\dist_{GH}\bigl(X,\,D_t(X)\bigr)\le \frac{t}{2}$.
\end{lemma}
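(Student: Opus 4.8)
We treat the two assertions separately; both follow from the explicit description of $D_t(X)$ as the union of $\Phi(X)$ with all line segments $[\Phi(x),\Phi(y)]\ss C_b(X)$ for which $\dist_X(x,y)\le t$, where $\Phi$ is the Kuratowski embedding.

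For $(i)$ the plan is first to reduce path-connectedness of $D_c(X)$ to connecting points of $\Phi(X)$ to one another. Indeed, any point of $D_c(X)$ lying on a segment $[\Phi(x),\Phi(y)]$ is joined to the endpoint $\Phi(x)\in\Phi(X)$ by a sub-arc of that very segment, which is a continuous path inside $D_c(X)$. So it suffices to join arbitrary $\Phi(x),\Phi(x')$ by a path in $D_c(X)$. Here I would use the hypothesis: since $u_X(x,x')\le\diam\U(X)<c$ and $u_X(x,x')$ is an \emph{infimum} over chains, there is a chain $x=x_0,x_1,\dots,x_n=x'$ with $\max_{0\le i\le n-1}\dist_X(x_i,x_{i+1})<c$, in particular $\le c$. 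Consequently every segment $[\Phi(x_i),\Phi(x_{i+1})]$ lies in $D_c(X)$, and their concatenation is a continuous path from $\Phi(x)$ to $\Phi(x')$; a finite concatenation of continuous paths is a continuous path, so $D_c(X)$ is path-connected.

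For $(ii)$ the plan is to exhibit a favourable metric realization of the pair $(X,D_t(X))$. Since $\Phi\:X\to C_b(X)$ is isometric and $D_t(X)\ss C_b(X)$ carries the induced (subspace) metric, the triple $\bigl(\Phi(X),D_t(X),C_b(X)\bigr)$ is a metric realization of $(X,D_t(X))$, and $\Phi(X)\ss D_t(X)$. It then remains to estimate $\dist_H\bigl(\Phi(X),D_t(X)\bigr)$ inside $C_b(X)$. One containment, $\Phi(X)\ss D_t(X)$, is free. For the other, take $z\in D_t(X)$ lying on a segment $[\Phi(x),\Phi(y)]$ with $\dist_X(x,y)\le t$, say $z=\lambda\Phi(x)+(1-\lambda)\Phi(y)$, $\lambda\in[0,1]$. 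Since $\|\Phi(x)-\Phi(y)\|=\dist_X(x,y)\le t$, we get $\min\bigl\{\|z-\Phi(x)\|,\|z-\Phi(y)\|\bigr\}=\min\{1-\lambda,\lambda\}\,\|\Phi(x)-\Phi(y)\|\le \tfrac t2$, so $z\in B_{t/2}\bigl(\Phi(X)\bigr)$. Hence $\dist_H\bigl(\Phi(X),D_t(X)\bigr)\le t/2$, and therefore $\dist_{GH}\bigl(X,D_t(X)\bigr)\le t/2$.

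The arguments are essentially bookkeeping; the only two points that need a little care are, for $(i)$, that the \emph{strict} inequality $\diam\U(X)<c$, together with $u_X$ being an infimum, is exactly what guarantees a chain whose maximal step does not exceed $c$, so that all the needed segments genuinely belong to $D_c(X)$; and, for $(ii)$, that the ``induced metric'' on $D_t(X)$ is the subspace metric inherited from $C_b(X)$, which is what legitimizes the realization $\bigl(\Phi(X),D_t(X),C_b(X)\bigr)$ and makes the Hausdorff bound immediate. I do not expect any substantial obstacle beyond these remarks.
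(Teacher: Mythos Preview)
Your argument for $(i)$ is exactly the paper's: reduce to connecting points of $\Phi(X)$, and then use $\diam\U(X)<c$ to produce a chain with steps at most $c$, hence a polygonal path inside $D_c(X)$.

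For $(ii)$ your proof is correct but proceeds along a slightly different track. The paper works through the correspondence formula (Claim~\ref{claim:distGHformula}): it sets $R=\bigl\{(x,q):\dist_{C_b(X)}(\Phi(x),q)\le t/2\bigr\}$ and checks $\dis R\le t$ via the triangle inequality. You instead invoke the original definition of $\dist_{GH}$ through metric realizations, observing that $\bigl(\Phi(X),D_t(X),C_b(X)\bigr)$ is a realization and bounding $\dist_H\bigl(\Phi(X),D_t(X)\bigr)$ directly by the midpoint estimate on each segment. The two arguments are essentially dual---your Hausdorff bound is precisely what guarantees the paper's $R$ is a correspondence with the stated distortion---so neither buys anything the other does not; yours is marginally more direct in that it avoids quoting Claim~\ref{claim:distGHformula}.
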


\begin{proof}
$(i)$ Let $\Phi\: X \to C_b(X)$ be the Kuratowski embedding. Since every point from $D_c(X)\backslash \Phi(X)$ belongs to a segment with endpoints in $\Phi(X)$, it suffices to show that every pair of points $p,\,q\in \Phi(X)$ can be connected by a polygonal line that lies in $D_c(X)$ with all its edges. Since $\diam \U(X) < c$, it follows that there exist $x_ 0 = \Phi^{-1}(p),\,x_1,\,\ldots,\,x_n = \Phi^{-1}(q)$ such that $$\max_{0\le j \le n-1}\dist_X(x_j,\,x_{j+1})\le c.$$ Since Kuratowski embedding is isometric, we have $$\dist_{C_b(X)}\bigl(\Phi(x_j),\,\Phi(x_{j+1})\bigr) = \dist_X(x_j,\,x_{j+1}).$$ Therefore, by construction of $D_c(X)$, there is a polygonal line in $D_c(X)$ with vertices $\Phi(x_0) = p,\,\Phi(x_1),\,\ldots,\,\Phi(x_n) = q$ that connects $\Phi(x_0)$ with $\Phi(x_n)$ and belongs to $D_c(X)$ with all its edges.

$(ii)$ Consider the following correspondence $R$ between $X$ and $D_t(X)$: $$R = \Bigl\{(x,\,q)\colon \dist_{C_b(X)}\bigl(\Phi(x),\,q\bigr)\le \frac{t}{2},\;x\in X,\,q\in D_t(X)\Bigr\}.$$ 

For any $(x,\,q)$, $(x',\,q')\in R$, by triangle inequality, we have
\begin{multline*}
\bigl|\dist_X(x,\,x')-\dist_{D_t(X)}(q,\,q')\bigr| = \bigl|\dist_{D_t(X)}\bigl(\Phi(x),\,\Phi(x')\bigr)-\dist_{D_t(X)}(q,\,q')\bigr| \le \\ \le \dist_{D_t(X)}\bigl(\Phi(x),\,q\bigr) + \dist_{D_t(X)}\bigl(\Phi(x'),\,q'\bigr)\le t.
\end{multline*}
Therefore, $\dis R\le t$. Claim \ref{claim:distGHformula} implies that $\dist_{GH}\bigl(X,\,D_t(X)\bigr)\le \frac{t}{2}$.
\end{proof}

\section{Main results}
\markright{\thesection.~Main results}
We start with a simple observation that Theorem~\ref{UltraInequality} can be easily generalized to the case of arbitrary metric spaces with a proof nearly identical to the one for finite metric spaces from~\cite{UltraMemoli}.

\begin{theorem}\label{UltraGeneralInequality}
For all metric spaces $X$ and $Y$, the following inequality holds
$$\dist_{GH}(X,\,Y)\ge\dist_{GH}\bigl(\textbf{\emph{U}}(X),\,\textbf{\emph{U}}(Y)\bigr).$$
\end{theorem}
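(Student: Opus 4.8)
The plan is to mimic the proof of Theorem~\ref{UltraInequality} for bounded spaces, observing that the only place boundedness might have been used is to guarantee that the relevant infima and suprema are finite, and that this is unnecessary. Let $X$ and $Y$ be arbitrary metric spaces. Fix an arbitrary correspondence $R\in\cR(X,\,Y)$ with $\dis R = 2r < \infty$ (if no such $R$ exists, i.e. $\dist_{GH}(X,\,Y) = \infty$, the inequality is trivial). First I would check that $R$ descends to a well-defined relation $\bar R$ between $\U(X)$ and $\U(Y)$: if $u_X(x,\,x') = 0$ and $(x,\,y),\,(x',\,y')\in R$, then $|yy'|\le |xx'| + \dis R$ is not quite enough on its own, so instead I would use the chain definition directly — given a chain $x = x_0,\,\ldots,\,x_n = x'$ in $X$ with small consecutive distances, pick $y_i\in Y$ with $(x_i,\,y_i)\in R$ (with $y_0 = y$, $y_n = y'$), and then $|y_iy_{i+1}|\le |x_ix_{i+1}| + \dis R$, which shows $u_Y(y,\,y')\le u_X(x,\,x') + \dis R$; by symmetry $|u_X(x,\,x') - u_Y(y,\,y')|\le \dis R$. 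In particular $u_X(x,\,x') = 0$ forces $u_Y(y,\,y')\le \dis R$, but that is not zero, so $\bar R$ is genuinely multivalued — which is fine, a correspondence is allowed to be.

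The key computation is then that $\bar R := \{([x],\,[y])\colon (x,\,y)\in R\}$ is a correspondence between $\U(X)$ and $\U(Y)$ (surjectivity on each side is inherited from $R$ together with surjectivity of the quotient maps), and that $\dis\bar R\le \dis R$. For the distortion bound, take $([x],\,[y]),\,([x'],\,[y'])\in\bar R$; I need $\bigl|u_X(x,\,x') - u_Y(y,\,y')\bigr|\le\dis R$, which is exactly the chain inequality established above, and this quantity depends only on the equivalence classes since $u_X,\,u_Y$ are the quotient metrics. Hence $2\dist_{GH}\bigl(\U(X),\,\U(Y)\bigr)\le\dis\bar R\le\dis R$ by Claim~\ref{claim:distGHformula}, and taking the infimum over all $R$ gives $2\dist_{GH}\bigl(\U(X),\,\U(Y)\bigr)\le 2\dist_{GH}(X,\,Y)$.

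I do not expect a serious obstacle here; the point of the theorem is precisely that nothing in the argument uses finiteness or boundedness. The one spot requiring a little care is the bookkeeping with infima over chains: since $u_X(x,\,x')$ is defined as an infimum, to compare it with $u_Y(y,\,y')$ I should argue that for every $\e > 0$ there is a chain in $X$ realizing $u_X(x,\,x') + \e$, lift it to a chain in $Y$ as above, conclude $u_Y(y,\,y')\le u_X(x,\,x') + \dis R + \e$, and then let $\e\to 0$; the reverse inequality is symmetric. Everything else — well-definedness of $\bar R$ on classes, surjectivity, and the final application of Claim~\ref{claim:distGHformula} — is routine. So the proof is essentially a verbatim transcription of the finite-space argument of~\cite{UltraMemoli}, with the infimum-over-chains step written out carefully to make clear that no finiteness assumption intervenes.
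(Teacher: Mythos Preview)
Your proposal is correct and follows essentially the same route as the paper: pick a correspondence $R$ with small distortion, lift an $\e$-optimal chain for $u_X(x,\,x')$ to a chain in $Y$ via $R$, deduce $|u_X(x,\,x')-u_Y(y,\,y')|\le\dis R$, and apply Claim~\ref{claim:distGHformula}. The only cosmetic difference is that the paper keeps $R$ as a correspondence between the pseudometric spaces $(X,\,u_X)$ and $(Y,\,u_Y)$ rather than passing to the quotient $\bar R$; your discussion of ``well-definedness'' of $\bar R$ is therefore unnecessary (as you yourself note, $\bar R$ is just the image of $R$ under the quotient maps and is automatically a correspondence), but it does no harm.
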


\begin{proof}
Let $R\subset X\times Y$ be a correspondence, such that $\dis R = c\le 2\dist_{GH}(X,\,Y)+\varepsilon$. Choose arbitrary $(x,\,y),\,(x',\,y')\in R$. For every $\varepsilon'>0$, there exist points $x_0 = x,\,\ldots,\,x_n = x'$ in $X$ such that $\max_{0\le i\le n-1} |x_ix_{i+1}|\le u_X(x,\,x')+\varepsilon'$. Now let points $y_0 = y,\,y_1,\,\ldots,\,y_n = y'$ be such that $(x_j,\,y_j)\in R$ for all $1\le j\le n-1$. Then, by definition of distortion, for all $j = 0,\,1,\,\ldots,\,n-1$, the inequalities hold $$|y_jy_{j+1}|\le |x_jx_{j+1}|+c\le u_X(x,\,x')+c+\varepsilon'.$$ Hence, $u_Y(y,\,y')\le u_X(x,\,x')+c+\varepsilon'$. Since $\varepsilon'$ is arbitrary, we conclude that $u_Y(y,\,y')\le u_X(x,\,x')+c$. Applying the same argument to $R^{-1}\subset Y\times X$, we obtain $u_X(x,\,x')\le u_Y(y,\,y')+c$. Thus, $\bigl|u_X(x,\,x')-u_Y(y,\,y')\bigr|\le c$. It means that, for a correspondence $R$ between $(X,\,u_X)$ and $(Y,\,u_Y)$, its distortion does not exceed the value $c$ and, therefore, the value $2\dist_{GH}(X,\,Y)+\varepsilon$. Since $\varepsilon$ is arbitrary, by Claim \ref{claim:distGHformula} we obtain that $\dist_{GH}\bigl(\textbf{U}(X),\,\textbf{U}(Y)\bigr)\le\dist_{GH}(X,\,Y)$. 
\end{proof}

\begin{corollary}\label{UltBetweenClouds}
The mapping $X\mapsto \emph{\U}(X)$ induces a mapping of clouds $[X]\to[\emph{\U}(X)]$.
\end{corollary}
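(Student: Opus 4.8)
The statement to prove is Corollary~\ref{UltBetweenClouds}: the assignment $X\mapsto\U(X)$ descends to a well-defined map between clouds, i.e.\ if $\dist_{GH}(X,Y)<\infty$ then $\dist_{GH}(\U(X),\U(Y))<\infty$, and moreover the cloud $[\U(X)]$ depends only on the cloud $[X]$. The plan is to read this off directly from Theorem~\ref{UltraGeneralInequality}, which we are entitled to use.

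First I would unpack what ``induces a mapping of clouds'' means. We already have a well-defined map on isometry classes (and even on $\mathcal{GH}_0$, since $\dist_{GH}(X,Y)=0$ forces $\dis R$ arbitrarily small for some correspondence $R$, hence by the argument in Theorem~\ref{UltraGeneralInequality} $\dist_{GH}(\U(X),\U(Y))=0$, so $\U$ respects the zero-distance equivalence). It then remains to check that $\U$ sends a cloud into a cloud: if $X\sim_1 Y$, i.e.\ $\dist_{GH}(X,Y)<\infty$, then $\dist_{GH}(\U(X),\U(Y))\le\dist_{GH}(X,Y)<\infty$ by Theorem~\ref{UltraGeneralInequality}, so $\U(X)\sim_1\U(Y)$. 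Hence for any fixed representative $X$ of a cloud $\mathfrak{C}$, the assignment $\mathfrak{C}\ni [Z]\mapsto[\U(Z)]$ lands entirely inside the single cloud $[\U(X)]$, and the resulting map $[X]\to[\U(X)]$ is independent of the chosen representative $X$ for the same reason.

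The only genuine verification is the chain of equivalences just listed; each is an immediate consequence of Theorem~\ref{UltraGeneralInequality} together with the definitions of $\mathcal{GH}_0$ and of $\sim_1$. There is no real obstacle here: the content of the corollary is entirely contained in the inequality $\dist_{GH}(\U(X),\U(Y))\le\dist_{GH}(X,Y)$, and the corollary is essentially a bookkeeping statement translating that inequality into the language of clouds. If anything, the one point worth a sentence of care is the well-definedness on $\mathcal{GH}_0$ (the zero-distance case), since a cloud is a subclass of $\mathcal{GH}_0$ rather than of the class of isometry classes; but this too follows by taking $\varepsilon\to 0$ in the proof of Theorem~\ref{UltraGeneralInequality}. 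So the proof will be three or four lines: invoke Theorem~\ref{UltraGeneralInequality} for the zero case and for the finite case, and conclude.
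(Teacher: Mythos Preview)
Your proposal is correct and matches the paper's approach exactly: the paper states this as an immediate corollary of Theorem~\ref{UltraGeneralInequality} with no written proof, and your argument simply unpacks why the $1$-Lipschitz inequality forces $\U$ to respect both the zero-distance equivalence and the finite-distance equivalence $\sim_1$. If anything, you are being more thorough than the paper by explicitly noting the well-definedness on $\mathcal{GH}_0$.
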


Now we list a few simple properties of the mapping $\U$.

\begin{theorem}\label{theorem: SimpleUltraProperties}
$(i)$ For an arbitrary ultrametric space $Y$, we have $\emph{\U}(Y) = Y$.\\
$(ii)$ Let $(X,\,\dist_X)$ and $(Y,\,\dist_Y)$ be arbitrary metric spaces. Let $\rho$ be a fair metric on $X\times Y$ such that, for all $p=(x,\,y),\,p'=(x',\,y')\in X\times Y$, $$\rho(p,\,p')\ge \max\bigl\{\dist_X(x,\,x'),\,\dist_Y(y,\,y')\bigr\}.$$  Then $\emph{\U}\bigl(X\times_\rho Y\bigr) = \emph{\U}(X)\times_{\ell^\infty}\emph{\U}(Y)$.

\end{theorem}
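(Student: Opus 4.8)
Part $(i)$ is immediate from the definitions: if $Y$ is ultrametric then the strengthened triangle inequality gives $\dist_Y(x,x')\le\max_{0\le i\le n-1}\dist_Y(x_i,x_{i+1})$ for every chain $x=x_0,\ldots,x_n=x'$, so $u_Y(x,x')=\dist_Y(x,x')$, and hence $u_Y$ does not identify any distinct points. Thus $\U(Y)=(Y,u_Y)=(Y,\dist_Y)=Y$.

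For part $(ii)$ the plan is to compute $u_{X\times_\rho Y}$ directly on a pair $p=(x,y)$, $p'=(x',y')$ and show it equals $\max\{u_X(x,x'),u_Y(y,y')\}$; the equality of quotient spaces then follows because the equivalence $u=0$ on $X\times Y$ becomes exactly the product of the two equivalences $u_X=0$, $u_Y=0$, and the induced metric is the $\ell^\infty$-combination. First I would prove the lower bound $u_{X\times_\rho Y}(p,p')\ge\max\{u_X(x,x'),u_Y(y,y')\}$: given any chain $p=p_0,\ldots,p_n=p'$ in $X\times Y$ with $p_i=(x_i,y_i)$, the hypothesis $\rho\ge\ell^\infty$ gives $\rho(p_i,p_{i+1})\ge\dist_X(x_i,x_{i+1})$ and $\rho(p_i,p_{i+1})\ge\dist_Y(y_i,y_{i+1})$, so $\max_i\rho(p_i,p_{i+1})\ge\max_i\dist_X(x_i,x_{i+1})\ge u_X(x,x')$, and likewise for $Y$; taking the infimum over chains yields the bound. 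For the upper bound $u_{X\times_\rho Y}(p,p')\le\max\{u_X(x,x'),u_Y(y,y')\}$, I would fix $\varepsilon>0$, pick an $X$-chain $x=x_0,\ldots,x_m=x'$ with all steps $\le u_X(x,x')+\varepsilon$ and a $Y$-chain $y=y_0,\ldots,y_k=y'$ with all steps $\le u_Y(y,y')+\varepsilon$, and concatenate: first move $(x,y)\to(x',y)$ one coordinate at a time through $(x_i,y)$, then $(x',y)\to(x',y')$ through $(x',y_j)$. Because $\rho$ is fair, each such elementary step has $\rho$-length equal to the corresponding $\dist_X$- or $\dist_Y$-step, hence at most $\max\{u_X(x,x'),u_Y(y,y')\}+\varepsilon$; letting $\varepsilon\to0$ gives the bound. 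Combining the two inequalities gives $u_{X\times_\rho Y}(p,p')=\max\{u_X(x,x'),u_Y(y,y')\}$.

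Finally I would translate this pointwise identity into the claimed isometry of quotients. From $u_{X\times_\rho Y}((x,y),(x',y'))=\max\{u_X(x,x'),u_Y(y,y')\}$ it follows that $(x,y)\sim(x',y')$ in $X\times_\rho Y$ iff $u_X(x,x')=0$ and $u_Y(y,y')=0$, i.e. iff $x\sim x'$ in $X$ and $y\sim y'$ in $Y$; therefore the underlying set of $\U(X\times_\rho Y)$ is naturally identified with $\U(X)\times\U(Y)$. Under this identification the metric inherited from $u_{X\times_\rho Y}$ is precisely $([x],[y]),([x'],[y'])\mapsto\max\{u_X(x,x'),u_Y(y,y')\}=\max\{\dist_{\U(X)}([x],[x']),\dist_{\U(Y)}([y],[y'])\}$, which is the $\ell^\infty$-metric on $\U(X)\times\U(Y)$. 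Hence $\U(X\times_\rho Y)=\U(X)\times_{\ell^\infty}\U(Y)$.

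\textbf{Main obstacle.} The only nontrivial point is the upper bound in part $(ii)$: one must be careful that the concatenated ``staircase'' chain genuinely lives in $X\times Y$ and that the fairness hypothesis applies to each segment — fairness controls $\rho$ exactly along the slices $X\times\{y\}$ and $\{x\}\times Y$, which is what makes the staircase argument work, and it is where the assumption that $\rho$ is fair (not merely bounded below by $\ell^\infty$) is essential. The lower bound, by contrast, uses only $\rho\ge\ell^\infty$ and is routine.
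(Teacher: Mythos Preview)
Your proof is correct and follows essentially the same route as the paper. The only cosmetic difference is in the upper bound for $(ii)$: the paper interleaves the $X$- and $Y$-steps into an alternating zigzag $(x_0,y_0),(x_0,y_1),(x_1,y_1),\ldots$ (padding the shorter sequence with repeats of its endpoint), whereas you concatenate an $L$-shaped path $(x,y)\to(x',y)\to(x',y')$; both constructions move one coordinate at a time and rely on fairness in exactly the same way. Your final paragraph making the passage from the pointwise identity $u_{X\times_\rho Y}=\max\{u_X,u_Y\}$ to the isometry of quotients explicit is a detail the paper leaves implicit, so your write-up is slightly more complete on that point.
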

\begin{proof}
$(i)$ Take arbitrary $x,\,x'\in Y$. We have to show that $u_Y(x,\,x') = |xx'|$. Since $u_Y(x,\,x') \le |xx'|$, it suffices to prove that, for arbitrary $x_0 = x,\,x_1,\,\ldots,\,x_n = x'$, we have $\max_{0\le j\le n-1} |x_jx_{j+1}|\ge |xx'|$. We proceed by induction. For $n = 1$, the statement turns into an identity. Let us prove the induction step. By the ultrametric triangle inequality, $\max\bigl\{|x_{n-2}x_{n-1}|,\,|x_{n-1}x_n|\bigr\}\ge |x_{n-2}x_{n}|$. Hence, $$\max_{0\le j\le n-1} |x_jx_{j+1}|\ge \max\Bigl\{\max_{0\le j \le n-3}|x_jx_{j+1}|,\,|x_{n-2}x_n|\Bigr\}\ge |xx'|,$$ where the latter inequality is true by induction.  

$(ii)$ Choose $p=(x,\,y),\,p'=(x',\,y')\in X\times Y$. We have to show that $$u_{X\times_\rho Y}(p,\,p') = \max\bigl\{u_X(x,\,x'),\,u_Y(y,\,y')\bigr\}.$$

For $\varepsilon > 0$, choose $p_0 = (x_0,\,y_0) = (x,\,y),\,p_1 = (x_1,\,y_1),\,\ldots,\,p_n = (x_n,\,y_n) = (x',\,y')$ such that $$\max_{0\le j\le n-1}\rho(p_j,\,p_{j+1})\le u_{X\times_\rho Y}(p,\,p')+\varepsilon.$$

Since $\rho(p_j,\,p_{j+1}) \ge \max\bigl\{\dist_{X}(x_j,\,x_{j+1}),\,\dist_{Y}(y_j,\,y_{j+1})\bigr\}$, we have
\begin{align*}
    \max_{0\le j \le n-1} \dist_X(x_j,\,x_{j+1})\le \max_{0\le j\le n-1}\rho(p_j,\,p_{j+1})\le u_{X\times_\rho Y}(p,\,p')+\varepsilon, \\
    \max_{0\le j \le n-1} \dist_Y(y_j,\,y_{j+1})\le \max_{0\le j\le n-1}\rho(p_j,\,p_{j+1})\le u_{X\times_\rho Y}(p,\,p')+\varepsilon.
\end{align*}
Therefore,
\begin{align*}
    u_X(x,\,x')\le u_{X\times_\rho Y}(p,\,p')+\varepsilon,\\
    u_Y(y,\,y')\le u_{X\times_\rho Y}(p,\,p')+\varepsilon.
\end{align*}
Since $\varepsilon > 0$ is arbitrary, by tending $\varepsilon$ to $0$ we obtain $$\max\bigl\{u_X(x,\,x'),\,u_Y(y,\,y')\bigr\}\le u_{X\times_\rho Y}(p,\,p').$$

Now we will prove the opposite inequality. 

Choose $\varepsilon > 0$, $x_0 = x,\,x_1,\,\ldots,\,x_n = x'$ and $y_0 = y,\,y_1,\,\ldots,\,y_m = y'$ such that $$\max_{0\le j\le n-1} \dist_X(x_j,\,x_{j+1})\le u_X(x,\,x')+\varepsilon,$$ $$\max_{0\le j\le m-1} \dist_Y(y_j,\,y_{j+1})\le u_Y(y,\,y')+\varepsilon.$$ Without loss of generality suppose that $n > m$. In this case we put $y_{m+1} = \ldots = y_n = y'$.

Now consider the following polygonal line in $X\times Y$: 
\begin{align*}
p_0 &= (x_0,\,y_0),\;p_1 = (x_0,\,y_1),\;p_2 = (x_1,\,y_1),\;\ldots, p_{2i} = (x_i,\,y_i),\;p_{2i+1}=(x_i,\,y_{i+1}),\\ &\ldots,\;p_{2n-1} = (x_{n-1},\,y_n),\;p_{2n} = (x_n,\,y_n).
\end{align*}
Since metric $\rho$ on $X\times Y$ is fair, we observe that $$\rho(p_j,\,p_{j+1}) = \begin{cases}
    \dist_{X}(x_k,\,x_{k+1})\;\text{if} \;j = 2k+1,\,k=0,\,\ldots,\,n-1\\
    \dist_Y(y_k,\,y_{k+1})\;\text{if}\;j = 2k,\,k=0,\,\ldots,\,n-1.
\end{cases}$$
Therefore, by definition of $u_{X\times_\rho Y}$, we have
\begin{multline*}
u_{X\times_\rho Y}(p,\,p')\le \max_{0\le j \le 2n-1} \rho(p_j,\,p_{j+1})\le\\\le \max\Bigl\{\max_{0\le k\le n-1}\dist_{X}(x_k,\,x_{k+1}),\,\max_{0\le k\le n-1}\dist_Y(y_k,\,y_{k+1})\Bigr\} \le\\\le \max\bigl\{u_X(x,\,x'),\,u_Y(y,\,y')\bigr\}+\varepsilon.    
\end{multline*}

Since $\varepsilon > 0$ is arbitrary, by tending $\varepsilon$ to $0$ we obtain the required inequality $$u_{X\times_\rho Y}(p,\,p')\le \max\bigl\{u_X(x,\,x'),\,u_Y(y,\,y')\bigr\}.$$
\end{proof}

\begin{corollary}
For an arbitrary dotted connected metric space $A$ and an arbitrary metric space $X$, we have $\emph{\U}(X\times A) = \emph{\U}(X)$.
\end{corollary}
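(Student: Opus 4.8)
The plan is to combine the two previously established facts: the product formula for $\U$ from Theorem~\ref{theorem: SimpleUltraProperties}$(ii)$ and the triviality of $\U$ on dotted connected spaces from Remark~\ref{remark: UltConnected}. First I would check that the Manhattan metric on $X\times A$ is a fair metric that dominates the $\ell^\infty$-metric: fairness is immediate since restricting $\dist_X(x,x')+\dist_A(a,a')$ to a slice $X\times\{a\}$ kills the second summand and gives $\dist_X$, and similarly for $\{x\}\times A$; and the bound $\dist_X(x,x')+\dist_A(a,a')\ge\max\{\dist_X(x,x'),\dist_A(a,a')\}$ holds because both summands are nonnegative. Hence Theorem~\ref{theorem: SimpleUltraProperties}$(ii)$ applies with $\rho$ the $\ell^1$-metric and $Y=A$, yielding $\U(X\times A)=\U(X)\times_{\ell^\infty}\U(A)$.

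Next I would invoke Remark~\ref{remark: UltConnected}: since $A$ is dotted connected, $\U(A)=\Delta_1$, the one-point space. Substituting into the previous equality gives $\U(X\times A)=\U(X)\times_{\ell^\infty}\Delta_1$. The final step is the trivial observation that taking the $\ell^\infty$-product of any metric space $Z$ with a one-point space returns $Z$ (up to the canonical isometry $(z,\ast)\mapsto z$, under which $\max\{\dist_Z(z,z'),0\}=\dist_Z(z,z')$). Therefore $\U(X\times A)=\U(X)$, as claimed.

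I do not anticipate a genuine obstacle here — every ingredient is already in place, so the only thing to be careful about is bookkeeping. One should make sure that Remark~\ref{remark: UltConnected} is actually justified (it is stated there without proof, but it follows directly from the definition of dotted connectedness: for any two points and any $\varepsilon>0$ there is an $\varepsilon$-chain joining them, so $u_A\equiv 0$ and the quotient collapses to a point). It is also worth noting that no boundedness hypothesis on $X$ or $A$ is needed, so the corollary is stated for arbitrary metric spaces, consistent with the unbounded-friendly Theorem~\ref{UltraGeneralInequality} and Theorem~\ref{theorem: SimpleUltraProperties}$(ii)$.
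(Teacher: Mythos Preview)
Your proposal is correct and is exactly the argument the paper intends: the corollary is stated immediately after Theorem~\ref{theorem: SimpleUltraProperties} with no separate proof, so the derivation is precisely the combination of part~$(ii)$ of that theorem (applied to the $\ell^1$-metric, which is fair and dominates $\ell^\infty$) with Remark~\ref{remark: UltConnected}, followed by the trivial identification $\U(X)\times_{\ell^\infty}\Delta_1=\U(X)$.
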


\begin{lemma}\label{lemma: ProductLipschitz}
For an arbitrary metric space $A$, the mapping $p\colon X\mapsto X\times A,\,X\in[\Delta_1]$ is a $1$-Lipschitz mapping from $[\Delta_1]$ to $[A]$.
\end{lemma}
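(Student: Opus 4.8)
The plan is to show directly that for bounded metric spaces $X$ and $Y$, the Gromov--Hausdorff distance between $X\times A$ and $Y\times A$ does not exceed $\dist_{GH}(X,\,Y)$; since $A$ is arbitrary and fixed, the fact that $X\times A$ and $Y\times A$ remain bounded and lie in the same cloud $[A]$ follows from the standard inequality $\dist_{GH}(X\times A,\, Y\times A)\le \dist_{GH}(X,\,Y)+0$ applied with $Y$ replaced by a one-point space, or more elementarily by noting $\diam(X\times A)\le \diam X+\diam A<\infty$. So the mapping $p$ is well defined as a map $[\Delta_1]\to[A]$, and the Lipschitz estimate is the real content.

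First I would take a correspondence $R\in\cR(X,\,Y)$ with $\dis R\le 2\dist_{GH}(X,\,Y)+\e$, guaranteed by Claim~\ref{claim:distGHformula}. From $R$ I build the natural correspondence $\widetilde R$ between $X\times A$ and $Y\times A$ by setting
\[
\widetilde R=\bigl\{\bigl((x,\,a),\,(y,\,a)\bigr)\colon (x,\,y)\in R,\ a\in A\bigr\}.
\]
This is clearly a correspondence since $R$ is and the $A$-coordinate is carried along identically. Then for two pairs $\bigl((x,\,a),\,(y,\,a)\bigr)$ and $\bigl((x',\,a'),\,(y',\,a')\bigr)$ in $\widetilde R$, using the $\ell^1$ (Manhattan) metric on the products,
\[
\bigl|\dist_{X\times A}\bigl((x,\,a),\,(x',\,a')\bigr)-\dist_{Y\times A}\bigl((y,\,a),\,(y',\,a')\bigr)\bigr|
=\bigl|\,|xx'|+|aa'|-|yy'|-|aa'|\,\bigr|=\bigl|\,|xx'|-|yy'|\,\bigr|\le \dis R.
\]
Hence $\dis\widetilde R\le \dis R\le 2\dist_{GH}(X,\,Y)+\e$, and Claim~\ref{claim:distGHformula} gives $\dist_{GH}(X\times A,\,Y\times A)\le \dist_{GH}(X,\,Y)+\tfrac{\e}{2}$; letting $\e\to 0$ yields the $1$-Lipschitz bound. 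The cancellation of the $|aa'|$ terms is exactly where the choice of the Manhattan metric (as opposed to, say, $\ell^\infty$) makes the argument go through cleanly.

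There is essentially no hard step here; the only points to be careful about are (a) verifying $\widetilde R$ is genuinely a correspondence, which is immediate, and (b) confirming that $X\times A\in[A]$ so that $p$ lands in the right cloud. For (b) it suffices to observe $\dist_{GH}(X\times A,\,A)\le \dist_{GH}(X,\,\Delta_1)\cdot$ (the same computation with $Y=\Delta_1$, identifying $\Delta_1\times A$ with $A$), or directly that $X\times A$ is bounded whenever $X$ is and then note $A$ and $X\times A$ both lie in $[A]$ because their GH-distance is finite. If one wanted $A$ itself bounded this is transparent; for unbounded $A$ one still has $\dist_{GH}(X\times A,\,A)\le\diam X<\infty$ by the correspondence $\{((x,\,a),\,a)\}$ whose distortion is at most $\diam X$, so $X\times A\in[A]$ in all cases. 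This completes the proof.
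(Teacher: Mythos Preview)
Your proof is correct and follows essentially the same route as the paper: build the diagonal correspondence $\widetilde R=\{((x,a),(y,a)):(x,y)\in R\}$, observe that the $|aa'|$ terms cancel in the $\ell^1$ metric so $\dis\widetilde R=\dis R$, and conclude via Claim~\ref{claim:distGHformula}. You supply more detail than the paper (the explicit distortion computation and the verification $\dist_{GH}(X\times A,A)\le\diam X$ for the well-definedness of $p$), but the argument is the same.
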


\begin{proof}
Since $\diam X < \infty$ for all $X\in[\Delta_1]$, it follows that $\dist_{GH}(X\times A,\,A) < \infty$. Thus, $p$ maps $[\Delta_1]$ into some subclass of $[A]$.

Let $X,\,Y\in[\Delta_1]$, $\dist_{GH}(X,\,Y) = c$. By Claim \ref{claim:distGHformula}, for an arbitrary $\varepsilon > 0$, there exists a correspondence $R\subset X\times Y$ such that $\dis R\le 2c+\varepsilon$. Consider a correspondence $$S = \Bigl\{\bigl((x,\,a),\,(y,\,a)\bigr)\:(x,\,y)\in R,\;a\in A\Bigr\}$$ between $X\times A$ and $Y\times A$. Its distortion equals $\dis R$. Hence, $2\dist_{GH}(X\times A,\,Y\times A)\leq 2c+\varepsilon$ for all $\varepsilon > 0$. Thus, $\dist_{GH}(X\times A,\,Y\times A)\le \dist_{GH}(X,\,Y)$.
\end{proof}

\begin{theorem}
$(i)$ The subclass $\Ult\subset[\Delta_1]$ is closed.\\
$(ii)$ For an arbitrary dotted connected metric space $A$, the restriction to $\Ult$ of the mapping $p\colon [\Delta_1]\to [A],\;X\mapsto X\times A$ from Lemma~\ref{lemma: ProductLipschitz} is isometric. 
\end{theorem}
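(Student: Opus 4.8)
The plan is to prove the two parts separately, using the machinery already assembled in the excerpt.

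\textbf{Part (i): $\Ult$ is closed.} I would show that the complement of $\Ult$ in $[\Delta_1]$ is open. So take a bounded metric space $Y$ with $\dist_{GH}(Y,\,Z) > 0$ for every bounded ultrametric $Z$; in particular $\dist_{GH}(Y,\,\U(Y)) =: \delta > 0$, since $\U(Y)$ is itself a bounded ultrametric space (its diameter is at most $\diam Y < \infty$) and by Theorem~\ref{theorem: SimpleUltraProperties}$(i)$ it is the ``closest'' ultrametric candidate. The key point is that $\U$ is $1$-Lipschitz (Corollary~\ref{UltBetweenClouds}, i.e. Theorem~\ref{UltraGeneralInequality}): if $\dist_{GH}(Y,\,Y') < r$ then $\dist_{GH}(\U(Y),\,\U(Y')) < r$. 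Suppose $Y'$ is any bounded space with $\dist_{GH}(Y,\,Y') < \delta/3$. If $Y'$ were in $\Ult$, i.e. $\dist_{GH}(Y',\,Z) = 0$ for some bounded ultrametric $Z$, then $\U(Y')$ and $Y'$ would be at zero distance (apply Theorem~\ref{theorem: SimpleUltraProperties}$(i)$ together with the Lipschitz property: $\dist_{GH}(\U(Y'),\,Z) = \dist_{GH}(\U(Y'),\,\U(Z)) \le \dist_{GH}(Y',\,Z) = 0$), hence $\dist_{GH}(Y,\,Z) \le \dist_{GH}(Y,\,Y') < \delta/3$, and by the Lipschitz property again $\dist_{GH}(\U(Y),\,\U(Z)) = \dist_{GH}(\U(Y),\,Z) < \delta/3$, so $\delta = \dist_{GH}(Y,\,\U(Y)) \le \dist_{GH}(Y,\,Y') + \dist_{GH}(Y',\,Z) + \dist_{GH}(Z,\,\U(Y)) < \delta$, a contradiction. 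So $U_{\delta/3}(Y) \subset [\Delta_1]\setminus\Ult$, proving $\Ult$ closed. I would double-check the exact constants, but $\delta/3$ (or even $\delta/2$) should suffice.

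\textbf{Part (ii): the map $X \mapsto X\times A$ restricted to $\Ult$ is isometric, for $A$ dotted connected.} By Lemma~\ref{lemma: ProductLipschitz} we already have $\dist_{GH}(X\times A,\,Y\times A) \le \dist_{GH}(X,\,Y)$ for all $X,\,Y \in [\Delta_1]$, in particular for $X,\,Y \in \Ult$. The reverse inequality is where the hypotheses ``ultrametric'' and ``dotted connected'' enter. The idea is to apply $\U$: since the $\ell^1$-metric on $X\times A$ is fair and bounded below by the $\ell^\infty$-metric, Theorem~\ref{theorem: SimpleUltraProperties}$(ii)$ gives $\U(X\times A) = \U(X)\times_{\ell^\infty}\U(A)$. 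Now $A$ is dotted connected, so by Remark~\ref{remark: UltConnected} $\U(A) = \Delta_1$, whence $\U(X\times A) = \U(X)\times_{\ell^\infty}\Delta_1 = \U(X)$; and since $X$ is ultrametric, $\U(X) = X$ by Theorem~\ref{theorem: SimpleUltraProperties}$(i)$. Thus $\U(X\times A) = X$ and likewise $\U(Y\times A) = Y$ (this is exactly the content of the stated Corollary that $\U(X\times A) = \U(X)$). Applying the $1$-Lipschitz property of $\U$ (Theorem~\ref{UltraGeneralInequality}) to the spaces $X\times A$ and $Y\times A$ gives
$$\dist_{GH}(X,\,Y) = \dist_{GH}\bigl(\U(X\times A),\,\U(Y\times A)\bigr) \le \dist_{GH}(X\times A,\,Y\times A).$$
Combining with Lemma~\ref{lemma: ProductLipschitz} yields equality, so the restriction is isometric.

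\textbf{Where the real work is.} Part (ii) is essentially a bookkeeping assembly of results already proved, so the only subtlety there is making sure the chain $\U(X\times A) = \U(X)\times_{\ell^\infty}\U(A) = \U(X)\times_{\ell^\infty}\Delta_1 = \U(X) = X$ is literally valid, including the harmless identification $Z\times_{\ell^\infty}\Delta_1 = Z$. The genuinely delicate part is (i): one must be careful that $\dist_{GH}(Y,\,Z) = 0$ need not mean $Y$ and $Z$ are isometric (only that they represent the same point of $\mathcal{GH}_0$), so the argument should be phrased throughout in terms of the generalized metric $\dist_{GH}$ on $\mathcal{GH}_0$ rather than in terms of isometry classes — once that is done, everything reduces to the triangle inequality plus the two facts $\U(Y) \in \Ult$ always, and $\dist_{GH}(\U(Y'),\,Y') = 0$ whenever $Y' \in \Ult$. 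I expect no serious obstacle beyond keeping these set-theoretic niceties straight.
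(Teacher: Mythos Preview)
Your Part~(ii) is exactly the paper's argument: apply Theorem~\ref{theorem: SimpleUltraProperties} (plus Remark~\ref{remark: UltConnected}) to get $\U(U\times A)=U$ for ultrametric $U$ and dotted connected $A$, then sandwich $\dist_{GH}(U,U')$ between Lemma~\ref{lemma: ProductLipschitz} and Theorem~\ref{UltraGeneralInequality}.

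Your Part~(i), however, takes a genuinely different route. The paper argues elementarily: pick a representative $X\notin\Ult$, find a triple $x,y,z$ with $|xz|-\max\{|xy|,|yz|\}=t>0$, and use a correspondence of distortion $<t/2$ to transport the violating triple into any $Y$ with $\dist_{GH}(X,Y)<t/4$. You instead work abstractly through the retraction $\U$: from $\delta=\dist_{GH}(Y,\U(Y))>0$ and the $1$-Lipschitz property you derive that no $Y'$ within $\delta/2$ of $Y$ can lie at zero distance from an ultrametric $Z$ (otherwise $\dist_{GH}(\U(Y),Z)\le\dist_{GH}(Y,Z)<\delta/2$ forces $\delta<\delta$). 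This is correct and arguably cleaner, since it recycles the machinery already built; the paper's proof is more self-contained and yields an explicit radius tied to a concrete ultrametric violation. One caveat: your aside that $\U(Y)$ is the ``closest'' ultrametric is neither proved in the paper nor needed for your argument (what one actually gets from Theorem~\ref{UltraGeneralInequality} is only $\dist_{GH}(Y,\U(Y))\le 2\dist_{GH}(Y,Z)$ for every ultrametric $Z$), so you should drop that remark.
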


\begin{proof}
 $(i)$ Take an arbitrary bounded metric space $X$ that is not ultrametric. It contains points $x,\,y,\,z$ such that $|xz| > \max\bigl\{|xy|,\,|yz|\bigr\}$. Let $t = |xz| - \max\bigl\{|xy|,\,|yz|\bigr\}$.

It suffices to prove that $U_{t/4}(X)\subset [\Delta_1]\backslash \Ult$. 

Take an arbitrary bounded metric space $Y$ such that $\dist_{GH}(Y,\,X) < \frac{t}{4}$. By Claim~\ref{claim:distGHformula}, there exists a correspondence $R\subset X\times Y$ such that $\dis R < \frac{t}{2}$. Choose $x'\in R(x),\,y'\in R(y),\,z'\in R(z)$. Then $$|x'z'|> |xz| -\frac{t}{2} = \max\bigl\{|xy|,\,|yz|\bigr\}+\frac{t}{2} >\max\bigl\{|x'y'|,\,|y'z'|\bigr\}.$$ Therefore, $Y\in [\Delta_1]\backslash \Ult$. Hence, $\Ult$ is closed in $[\Delta_1]$ by definition. 

$(ii)$ Take arbitrary ultrametric spaces $U,\,U'\in[\Delta_1]$. Since $A$ is dotted connected, 
by Theorem~\ref{theorem: SimpleUltraProperties}, 
we have $\U(U) = \U(A\times U)$, $\U(U') = \U(A\times U')$. By Lemma~\ref{lemma: ProductLipschitz} and Theorem~\ref{UltraGeneralInequality}, we obtain $$\dist_{GH}(U,\,U')\ge \dist_{GH}(A\times U,\,A\times U')\ge\dist_{GH}\bigl(\U(A\times U), \,\U(A\times U')\bigr)=\dist_{GH}(U,\,U').$$ Therefore, $\dist_{GH}(U,\,U') = \dist_{GH}(A\times U,\,A\times U')$.

\end{proof}

\begin{theorem}\label{theorem: UltraPathconnected}
$(i)$ If a cloud $[X]$ contains a dotted connected metric space~$A$, then for every $B\in[X]$ it holds $\diam\emph{\U}(B) < \infty$.\\
$(ii)$ If there is a metric space $A\in[X]$ such that $\diam\emph{\U}(A) < \infty$, then there exists a path-connected metric space in $[X]$.
\end{theorem}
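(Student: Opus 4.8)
The plan is to prove each part by an explicit construction combined with the earlier results.

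For part $(i)$, suppose $A \in [X]$ is dotted connected, so by Remark~\ref{remark: UltConnected} we have $\U(A) = \Delta_1$, i.e.\ $\diam\U(A) = 0$. Given any $B \in [X]$, we have $\dist_{GH}(A,\,B) < \infty$ since $A$ and $B$ lie in the same cloud. By Theorem~\ref{UltraGeneralInequality}, $\dist_{GH}\bigl(\U(A),\,\U(B)\bigr) \le \dist_{GH}(A,\,B) < \infty$. Since $\U(A) = \Delta_1$ is a single point, finiteness of the Gromov--Hausdorff distance from a one-point space to $\U(B)$ forces $\diam\U(B) < \infty$; indeed, a correspondence of finite distortion between $\Delta_1$ and $\U(B)$ immediately bounds $\diam\U(B)$. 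This is a short argument with no real obstacle.

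For part $(ii)$, the idea is to use the metric space $D_c(A)$ from the Construction, for a suitable constant $c$. By hypothesis $\diam\U(A) < \infty$, so we may pick $c$ with $\diam\U(A) < c < \infty$. Then Lemma~\ref{lemma: Dc(X)}$(i)$ gives that $D_c(A)$ is path-connected, and Lemma~\ref{lemma: Dc(X)}$(ii)$ gives $\dist_{GH}\bigl(A,\,D_c(A)\bigr) \le \frac{c}{2} < \infty$. Therefore $D_c(A)$ lies in the same cloud as $A$, namely $[X]$, and $D_c(A)$ is the desired path-connected representative.

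The only point requiring a moment's care is the passage in part $(i)$ from ``$\dist_{GH}(\Delta_1,\,\U(B)) < \infty$'' to ``$\diam \U(B) < \infty$'': one notes that any correspondence $R$ between $\Delta_1 = \{*\}$ and $\U(B)$ consists of all pairs $(*,\,b)$ with $b \in \U(B)$, so $\dis R = \diam\U(B)$, and hence $2\dist_{GH}(\Delta_1,\,\U(B)) = \diam\U(B)$ by Claim~\ref{claim:distGHformula}. I expect no genuine obstacle in either part; both reduce to invoking the tools already assembled (Theorem~\ref{UltraGeneralInequality}, Remark~\ref{remark: UltConnected}, and Lemma~\ref{lemma: Dc(X)}), with the Construction of $D_c(A)$ being the key device for part $(ii)$.
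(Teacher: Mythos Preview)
Your proposal is correct and follows essentially the same approach as the paper: for $(i)$ you invoke Remark~\ref{remark: UltConnected} and the $1$-Lipschitz property of $\U$ (the paper phrases this via Corollary~\ref{UltBetweenClouds}, which is just the packaged version of your argument), and for $(ii)$ you apply Lemma~\ref{lemma: Dc(X)} to $D_c(A)$. In fact your choice of $c$ with $\diam\U(A) < c$ is slightly more careful than the paper's ``$c = \diam\U(A)$'', since Lemma~\ref{lemma: Dc(X)}$(i)$ is stated with a strict inequality.
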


\begin{proof}
$(i)$ By Remark~\ref{remark: UltConnected},
$\U(A) = \Delta_1$. Hence, by Corollary~\ref{UltBetweenClouds}, $\U\bigl([X]\bigr)\subset [\Delta_1]$. 

$(ii)$ Let $c = \diam \U(A)$. According to Lemma~\ref{lemma: Dc(X)}, $G = D_c(A)$ possesses all the required properties. 
\end{proof}

\begin{corollary}\label{corollary: striking}
$(i)$ If a cloud $[X]$ contains an ultrametric space, then $\emph{\U}\bigl([X]\bigr)\subset [X]$.\\
$(ii)$ If a cloud $[X]$ consists of unbounded metric spaces and contains a dotted connected metric space, then $[X]$ does not contain any ultrametric spaces. \\
$(iii)$ If a cloud $[X]$ contains an unbounded ultrametric space, then there are not any dotted connected metric spaces in $[X]$.
\end{corollary}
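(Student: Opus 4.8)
The plan is to obtain all three parts as immediate consequences of three facts already established above: that $\U$ is $1$-Lipschitz between clouds (Theorem~\ref{UltraGeneralInequality} together with Corollary~\ref{UltBetweenClouds}); that $\U(U)=U$ for every ultrametric space $U$ (Theorem~\ref{theorem: SimpleUltraProperties}$(i)$); and that a cloud containing a dotted connected space $A$ has $\diam\U(B)<\infty$ for every member $B$ (Theorem~\ref{theorem: UltraPathconnected}$(i)$, which in turn rests on Remark~\ref{remark: UltConnected}). No new constructions are required; the work is entirely in selecting the right combination.

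For $(i)$, fix an ultrametric representative $U\in[X]$, so $[X]=[U]$, and let $B\in[X]$ be arbitrary. Since $\dist_{GH}(B,U)<\infty$, Theorem~\ref{UltraGeneralInequality} gives $\dist_{GH}\bigl(\U(B),\,\U(U)\bigr)\le\dist_{GH}(B,U)<\infty$; and $\U(U)=U$ by Theorem~\ref{theorem: SimpleUltraProperties}$(i)$, so $\U(B)\in[U]=[X]$. As $B$ was arbitrary, $\U\bigl([X]\bigr)\subset[X]$.

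For $(ii)$ and $(iii)$ I would argue by contradiction, exploiting the same tension in both cases. In $(ii)$, if the cloud $[X]$ — which consists of unbounded spaces and contains a dotted connected space — also contained an ultrametric space $U$, then $U$ would be unbounded, whence $\diam\U(U)=\diam U=\infty$ by Theorem~\ref{theorem: SimpleUltraProperties}$(i)$; but Theorem~\ref{theorem: UltraPathconnected}$(i)$, applied to the dotted connected member of $[X]$, forces $\diam\U(B)<\infty$ for every $B\in[X]$, in particular $\diam\U(U)<\infty$ — a contradiction. In $(iii)$, if $U\in[X]$ is an unbounded ultrametric space then $\diam\U(U)=\diam U=\infty$ by Theorem~\ref{theorem: SimpleUltraProperties}$(i)$, so the presence of any dotted connected space in $[X]$ would again contradict Theorem~\ref{theorem: UltraPathconnected}$(i)$.

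The main obstacle is conceptual rather than computational: one has to recognise that $(ii)$ and $(iii)$ are two formulations of a single incompatibility — $\U$ crushes a dotted connected space to a one-point space while fixing an ultrametric space — and that unboundedness is precisely what makes these two behaviours impossible to reconcile inside one cloud, since finiteness of the Gromov--Hausdorff distance is preserved under $\U$. Once this is noticed, each part is a one-line invocation of the results above; the only point to be careful about is the elementary observation that a cloud containing an unbounded space (equivalently, any space at infinite distance from $\Delta_1$) contains no bounded spaces, so an ultrametric member of such a cloud is automatically unbounded.
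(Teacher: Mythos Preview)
Your proof is correct and follows essentially the same route as the paper: part $(i)$ is identical (combining $\U(U)=U$ with the $1$-Lipschitz property of $\U$), and for $(ii)$ and $(iii)$ the paper phrases the contradiction as $\U\bigl([X]\bigr)\subset[X]$ (from $(i)$) versus $\U\bigl([X]\bigr)\subset[\Delta_1]$ (from Theorem~\ref{theorem: UltraPathconnected}$(i)$), which is just a repackaging of your comparison $\diam\U(U)=\diam U=\infty$ against $\diam\U(B)<\infty$.
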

\begin{proof}
$(i)$ Let $Y\in[X]$ be an ultrametric space. By Theorem~\ref{theorem: SimpleUltraProperties}, $\U(Y) = Y$. Hence, by Corollary~\ref{UltBetweenClouds}, $\U\bigl([X]\bigr) \subset [X]$.

$(ii)$ Suppose $[X]$ contains an ultrametric space. Then, by $(i)$, $\U\bigl([X]\bigr) \subset [X]$. However, by Theorem~\ref{theorem: UltraPathconnected}, $\U\bigl([X]\bigr) \subset [\Delta_1]$. Since $X$ is unbounded, that is a contradiction.

$(iii)$ If $[X]$ contains a dotted connected space, by Theorem~\ref{theorem: UltraPathconnected}, we have $\U\bigl([X]\bigr)\subset [\Delta_1]$. However, by~$(i)$, $\U\bigl([X]\bigr)\subset [X]$. Since $X$ contains an unbounded metric space, that is a contradiction.
\end{proof}

\begin{example}
For an arbitrary $p> 1$, consider a geometric progression $q_p = \{p^n\colon n\in\N\}$ with the metric induced from $\R$. Note that $\U(q_p)$ is unbounded. Hence, $[q_p]$ does not contain any dotted connected metric spaces.
\end{example}




\end{document}